\newtheorem{theorem}{Theorem}
\newtheorem{lemma}[theorem]{Lemma}
\newtheorem{corollary}[theorem]{Corollary}
\theoremstyle{definition}
\theoremstyle{remark}
\newtheorem{remark}[theorem]{Remark}
\numberwithin{equation}{section}
\DeclareMathOperator{\vol}{vol}
\newcommand{\lmink}{\mathop{\underline{\mathcal M}}\nolimits}
\renewcommand{\epsilon}{\varepsilon}
\renewcommand{\phi}{\varphi}
\author{Arseniy~Akopyan{$^\spadesuit$}}
\email{akopjan@gmail.com}
\author{Roman~Karasev{$^\clubsuit$}}
\email{r\_n\_karasev@mail.ru}
\urladdr{http://www.rkarasev.ru/en/}
\title{A tight estimate for the waist of the ball}
\thanks{$^\spadesuit$ Supported by People Programme (Marie Curie Actions) of the European Union's Seventh Framework Programme (FP7/2007-2013) under REA grant agreement n$^\circ$[291734].}
\thanks{{$^\clubsuit$} Supported by the Russian Foundation for Basic Research grant 15-31-20403 (mol\_a\_ved).}
\thanks{{$^\clubsuit$} Supported by the Russian Foundation for Basic Research grant 15-01-99563 A}
\address{{$^\spadesuit$} Institute of Science and Technology Austria (IST Austria), Am Campus 1, 3400 Klosterneuburg, Austria}
\address{{$^\clubsuit$} Moscow Institute of Physics and Technology, Institutskiy per. 9, Dolgoprudny, Russia 141700}
\address{{$^\clubsuit$} Institute for Information Transmission Problems RAS, Bolshoy Karetny per. 19, Moscow, Russia 127994}
\begin{document}

\begin{abstract}
	We answer a question of M. Gromov on the waist of the unit ball.
\end{abstract}
\maketitle

In \cite{grom2003} Gromov proved that for any continuous map $f : \mathbb R^n \to \mathbb R^k$, where we fix a Gauss measure $\gamma$ in $\mathbb R^n$, there exists a fiber $f^{-1}(y)$ such that any its $t$-neighborhood has measure $\gamma(f^{-1}(y)+t)$ (we denote $t$-neighborhood simply by $+t$) at least the measure of a $t$-neighborhood of a linear subspace $\mathbb R^{n-k}\subset\mathbb R^n$. He also sketched the proof of a similar fact for the round sphere $\mathbb S^n$ with its uniform measure, later presented in a detailed form by Memarian~\cite{mem2009}. Those results were strong versions of so called \emph{waist} inequalities, that is inequalities that guarantee that some fiber $f^{-1}(y)$ is big in a certain sense.

One reasonable way to measure fibers is to use its lower $(n-k)$-dimensional Minkowski content defined by
\[
\lmink_{n-k} f^{-1}(y) := \liminf_{t\to+0} \frac{\vol_n (f^{-1}(y)+t)}{v_k t^k},
\]
where $n$ is the dimension of the ambient Riemannian manifold, $\vol_n$ is a the Riemannian volume of this ambient space, $v_k$ is the volume of the unit Euclidean $k$-dimensional ball. The results of Gromov and Memarian directly imply that for any continuous $f : \mathbb S^n\to\mathbb R^k$ some fiber $f^{-1}(y)$ has the lower Minkowski content at least that of the standard equatorial $\mathbb S^{n-k}\subset \mathbb S^n$.

Not much was known about the (Minkowski content) waists of spaces other than spheres, already the cases of the unit cube and the unit Euclidean ball were not known until recently. Answering a question by Gromov and Guth and generalizing the theorem of Vaaler~\cite{vaaler1979}, Klartag showed in~\cite{klartag2016} that for any continuous map $f : (0,1)^n \to \mathbb R^k$ some fiber $f^{-1}(y)$ has
\[
\lmink_{n-k} f^{-1}(y) \ge 1.
\]
The idea was to: (A) transport a Gaussian measure with density $e^{-\pi |x|^2}$ on the whole $\mathbb R^n$ to $(0,1)^n$ by a map $T : \mathbb R^n\to (0,1)^n$ defined coordinate-wise; (B) use the mentioned theorem of Gromov to estimate the measure of the $t$-neighborhood of the fiber of the composed map $f\circ T$; (C) use the Lipschitz property of $T$ to have the similar estimate for a $t$-neighborhood of $f^{-1}(y)$; (D) go to the limit $t\to 0$, as it is required in the definition of the Minkowski measure.

We use an analogous argument to answer another question of Gromov \cite[Question 3.1.A]{grom2003} about the ball in place of the cube:

\begin{theorem}
\label{thm:ball-waist}
Let $B^n\subset \mathbb R^n$ be the round unit ball. Then for any continuous $f : B^n\to \mathbb R^k$ some fiber $f^{-1}(y)$ has $(n-k)$-dimensional lower Minkowski content at least 
\[
v_{n-k} = \vol_{n-k} B^{n-k}, 
\]
and the estimate is evidently tight for linear $f$.
\end{theorem}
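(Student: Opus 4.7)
The plan is to follow Klartag's four-step template, with the ball in place of the cube. First, I would construct a $1$-Lipschitz radial map $T \colon \mathbb R^n \to B^n$, $T(x) = \rho(|x|)\, x/|x|$, pushing a Gaussian measure $\gamma$ on $\mathbb R^n$ forward to the uniform probability measure on $B^n$. The push-forward condition determines $\rho$ by $\rho(r)^n = \gamma(B_r)$ (in a suitable normalization), and the $1$-Lipschitz property reduces to the two inequalities $\rho'(r) \leq \rho(r)/r \leq 1$. The Gaussian's variance is chosen so that its density at the origin matches the ambient density on $B^n$; this gives equality at $r=0$ and allows a monotonicity argument (for instance, $\gamma(B_r) \le g(r) v_n r^n \cdot (\text{correction})$ from $g$ being radially decreasing) to propagate both bounds to all $r > 0$.

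Next, I would set $\tilde f = f \circ T \colon \mathbb R^n \to \mathbb R^k$ and apply Gromov's Gaussian waist theorem \cite{grom2003}: some fiber $\tilde F = \tilde f^{-1}(y)$ satisfies $\gamma(\tilde F + t) \geq \gamma(L + t)$ for every $t \geq 0$, where $L \subset \mathbb R^n$ is a standard linear subspace of dimension $n - k$. Letting $F = f^{-1}(y)$, the $1$-Lipschitz property yields $T(\tilde F + t) \subset F + t$, which combined with the push-forward gives
\[
\vol_n(F + t) \;\geq\; \vol_n(T(\tilde F + t)) \;=\; v_n\, \gamma(\tilde F + t) \;\geq\; v_n\, \gamma(L + t).
\]
Letting $t \to 0^+$, the small-$t$ asymptotic $\gamma(L + t) = c \cdot v_k t^k + o(t^k)$, with an explicit constant $c$ depending on the Gaussian's variance, produces the lower bound $\lmink_{n-k} F \geq v_n c$.

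The main obstacle — and what distinguishes the ball case from a direct transcription of the cube case — is to obtain exactly the tight constant $v_{n-k}$ in the final limit, rather than a strictly smaller expression such as $v_n^{(n-k)/n}$ that a naive radial Gaussian-to-uniform transport would produce (the two agree only when $k \in \{0,n\}$). Closing this gap is the crux of the argument, and I expect it to require either pushing $\gamma$ to a non-uniform target measure on $B^n$ whose radial density is designed to absorb the Jacobian of the transport along the equatorial $(n-k)$-ball, or replacing $\mathbb R^n$ by a different source with a more convenient waist inequality (for example the round sphere $\mathbb S^{n+k}$ together with the orthogonal projection onto $B^n$ and Memarian's sphere waist \cite{mem2009}, which is a $1$-Lipschitz submersion with $\mathbb S^k$ fibers and pushes uniform sphere measure to a density proportional to $(1-|y|^2)^{(k-1)/2}$ on $B^n$). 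Once the correct source and transport are identified, the steps (A)--(D) above go through essentially verbatim.
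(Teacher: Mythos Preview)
Your overall plan --- transport from a source space with a known waist theorem via a $1$-Lipschitz map, then pass to the limit $t\to 0$ --- is exactly the paper's, and you correctly locate the only obstruction at the constant. But you stop just short of the specific choice that makes the constant sharp. The paper takes the source to be $\mathbb S^{n+1}$ (not $\mathbb S^{n+k}$) with the orthogonal projection $P:\mathbb S^{n+1}\subset\mathbb R^{n+2}\to B^n$ dropping two coordinates. The reason is Archimedes' theorem: this particular projection pushes the uniform surface measure on $\mathbb S^{n+1}$ to \emph{exactly} the uniform Lebesgue measure on $B^n$, up to the factor $2\pi$. With a uniform push-forward there is no density to ``absorb'', and the chain
\[
\vol_n\bigl(f^{-1}(y)+t\bigr)\ \ge\ \tfrac{1}{2\pi}\,\sigma\bigl((f\circ P)^{-1}(y)+t\bigr)\ \ge\ \tfrac{1}{2\pi}\,s_{\,n+1-k}\,v_k\, t^k\,(1+o(1))
\]
is lossless; the identity $\tfrac{1}{2\pi}\,s_{\,n+1-k}=v_{n-k}$ then gives the theorem.

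Your alternative source $\mathbb S^{n+k}$ does push forward to the density proportional to $(1-|y|^2)^{(k-1)/2}$, which is non-constant for $k\ge 2$; bounding that density by its maximum and running steps (A)--(D) ``verbatim'' yields only $\lmink_{n-k}f^{-1}(y)\ge s_n/s_k$, and one checks (e.g.\ $n=4$, $k=2$, where $s_4/s_2=2\pi/3<\pi=v_2$) that this is strictly weaker than $v_{n-k}$ in general. The Gaussian radial transport has the analogous defect you already noted. So neither of your two proposed fixes, as stated, closes the gap; the missing idea is simply to choose the sphere dimension so that Archimedes makes the target measure uniform.
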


\begin{remark}
	\label{rem:waist to manifold}
	We could invoke the generalization~\cite{karvol2011} of the sphere waist theorem for continuous maps to arbitrary $k$-manifolds in place of $\mathbb R^k$ resulting in replacing $\mathbb R^k$ with an arbitrary $k$-manifold in this theorem. 
\end{remark}

\begin{proof}[Proof of the theorem]
Consider the orthogonal projection $P : \mathbb S^{n+1}\to B^n$, viewing $\mathbb S^{n+1}\subset\mathbb R^{n+2}$ as the standard unit sphere. It is known (the theorem of Archimedes~\cite{archimedes225onsphere}, see also \cite[Lemma 3]{bezdek2002pushing}) that this map sends the uniform measure $\sigma$ on $\mathbb S^{n+1}$ to the uniform Lebesgue measure in $B^n$ up to constant $2\pi$. The constant can be seen as follows: Let $v_n$ be the total volume of $B^n$ and let $s_{n+1} = (n+2)v_{n+2}$ be the total measure of $\mathbb S^{n+1}$; from the formula $v_\ell = \frac{\pi^{\ell/2}}{\Gamma(\ell/2+1)}$ we infer $s_{n+1} = 2\pi v_n$.

Now apply the waist of the sphere theorem of Gromov and Memarian~\cite{grom2003,mem2009} to find $y\in \mathbb R^k$ such that any $t$-neighborhood of $(f\circ P)^{-1}(y)$ has measure at least the measure of the $t$-neighborhood of the equatorial $(n+1-k)$-sphere $\mathbb S^{n+1-k}\subset \mathbb S^{n+1}$. We are not going to write down the formula, but in the asymptotic form this is going to be (we assume $t\to +0$)
\[
\sigma \left( (f\circ P)^{-1}(y) + t \right) \ge s_{n+1-k} v_k t^k (1 + o(1)).
\]
Since the projection $P$ is evidently $1$-Lipschitz, the $t$-neighborhood of $f^{-1}(y)$ contains the projection of the $t$-neighborhood of $(f\circ P)^{-1}(y)$, so we have (not forgetting the normalization)
\[
\vol_n \left( f^{-1}(y) + t \right) \ge 2\pi s_{n+1-k} v_k t^k (1 + o(1)).
\]
Passing to the limit, we obtain for the lower Minkowski content
\[
\lmink_{n-k} f^{-1}(y) := \liminf_{t\to+0} \frac{\vol_n \left( f^{-1}(y) + t \right)}{v_k t^k} \ge 2\pi s_{n+1-k} = \frac{1}{2\pi} \cdot 2\pi \cdot v_{n-k} =v_{n-k}.
\]

%
\end{proof}

\begin{remark}
	\label{rem:gromov}
	In \cite[Question 3.1.A]{grom2003} Gromov generally asked about the assumptions on a rotation invariant measure in $\mathbb{R}^n$ that guarantee a waist inequality with extremum attained at the $(n-k)$-plane through the origin, noting that this was not even known for the uniform measure in the ball. Theorem~\ref{thm:ball-waist} answers the latter question about the ball affirmatively, if we understand the $(n-k)$-volume of a fiber as the Minkowski measure. Extending the argument in the straightforward way, it is possible to show that the waist is attained at the $(n-k)$-plane through the origin for all measures in $\mathbb{R}^n$ obtained by orthogonal projection of the uniform measure of a sphere in higher dimension $\mathbb{R}^{N}$, $N>n$. 
	
	There remains an open question about the waist theorem for the ball in the sense of Gromov--Memarian, estimating the $n$-volume of any $t$-neighborhood of a fiber. Our proof gives some inequality by taking the estimate from the sphere, but this inequality is evidently not tight, unless we go to the limit $t\to +0$.
\end{remark}

Now we follow~\cite[Corollary 5.3]{klartag2016} and prove

\begin{corollary}
\label{cor:ellipsoid}
Consider an ellipsoid $E$ with principal axes $a_1\le a_2\le \dots \le a_n$ and a continuous map $f : E\to \mathbb R^k$. There exists a fiber $f^{-1}(y)$ with $\lmink_{n-k} f^{-1}(y) \ge v_{n-k}\prod_{i=1}^{n-k} a_i$.
\end{corollary}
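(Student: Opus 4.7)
The plan is to reduce the ellipsoid case to the ball case by a linear change of variables. Choose coordinates so that the principal axes of $E$ coincide with the standard basis and let $L := \mathrm{diag}(a_1, \dots, a_n)$, a bijection $B^n \to E$ with $\det L = a_1 \cdots a_n$. Apply Theorem~\ref{thm:ball-waist} to the continuous composition $f \circ L : B^n \to \mathbb{R}^k$ to produce $y \in \mathbb{R}^k$ such that
\[
\lmink_{n-k}\bigl((f \circ L)^{-1}(y)\bigr) \ge v_{n-k}.
\]
Write $F := f^{-1}(y)$ and $S := L^{-1}(F)$.

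The bijective linear change of variables $x = Lz$ translates the Euclidean $t$-neighbourhood of $F$ into an anisotropic neighbourhood of $S$: since $L^{-1}$ distributes over Minkowski sums, $L^{-1}(F + tB^n) = S + tE^*$ where $E^* := L^{-1}(B^n)$ is the ellipsoid with semi-axes $1/a_1, \dots, 1/a_n$, giving
\[
\vol_n(F + tB^n) = \det L \cdot \vol_n(S + tE^*).
\]
Thus the corollary will follow from the anisotropic Minkowski estimate
\[
\vol_n(S + tE^*) \ge \frac{v_{n-k} v_k}{a_{n-k+1} \cdots a_n} \, t^k (1 + o(1)) \qquad (t \to +0),
\]
since then $\det L/(a_{n-k+1} \cdots a_n) = a_1 \cdots a_{n-k}$ delivers exactly the required bound $\lmink_{n-k}(F) \ge v_{n-k} \prod_{i=1}^{n-k} a_i$.

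The geometric input is a classical minimal-section inequality for ellipsoids: every central $k$-dimensional section of $E^*$ has $k$-volume at least $v_k/(a_{n-k+1} \cdots a_n)$, with the minimum attained on the coordinate plane spanned by the shortest $k$ semi-axes. This is the Poincar\'e separation theorem applied to the matrix $\mathrm{diag}(a_i^2)$ (equivalently, Cauchy--Binet). For a smooth $(n-k)$-dimensional $S$ with tangent plane $V_x$ at $x$, the Weyl tube formula writes $\vol_n(S + tE^*) \sim t^k \int_S \vol_k(E^* \cap V_x^\perp)\, d\mathcal{H}^{n-k}(x)$, and this section bound combined with $\mathcal{H}^{n-k}(S) \ge v_{n-k}$ yields the anisotropic estimate immediately.

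The main obstacle is that the fiber $S$ guaranteed by Theorem~\ref{thm:ball-waist} need not be $(n-k)$-rectifiable: only its lower Minkowski content is controlled, and Hausdorff measure generally underestimates Minkowski content, so the smooth tube argument does not apply verbatim. The same obstacle arises in Klartag's proof of \cite[Corollary~5.3]{klartag2016} for bricks and is the one place where the argument requires real care; I would follow Klartag's strategy, adapted from the brick to the ellipsoid, to push the anisotropic Minkowski estimate through without any rectifiability hypothesis on the fiber.
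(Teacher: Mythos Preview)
Your reduction via $L=\mathrm{diag}(a_1,\ldots,a_n)$ and Theorem~\ref{thm:ball-waist} applied to $f\circ L$ is exactly how the paper begins. The statement you then isolate as the remaining task---your anisotropic Minkowski estimate, which amounts to the lower bound $\lmink_{n-k} L(S) \ge a_1\cdots a_{n-k}\,\lmink_{n-k} S$ for an arbitrary set $S$---is precisely what the paper states and proves as Theorem~\ref{thm:minklinear}.

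The gap you flag is real: the Weyl tube formula together with the minimal-section bound for $E^*$ handles smooth or rectifiable fibers, but not arbitrary closed sets, and ``follow Klartag's strategy'' is not a proof. The paper closes this gap by a different, entirely elementary route that needs no smoothness, no rectifiability, and no section inequality. It factors $L$ as a homothety, then one-coordinate stretches by factors $\ge 1$, then one-coordinate shrinks by factors $\le 1$, and bounds the effect of each on $\lmink_k$ by direct volume comparison (Lemmas~\ref{lem:scale}--\ref{lem:shrink}). The only nontrivial ingredient is Lemma~\ref{lem:shrink}: a one-coordinate contraction never increases $\lmink_k$. This is proved by Fubini along the contracted axis, comparing on each parallel line the union of segments contributed by points of $X$ to $(X+t)$ before and after contraction; contracting the centers of a family of segments can only merge connected components and shrink their diameters, so the one-dimensional measure on each line does not increase. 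The argument is pointwise in $t$ and purely set-theoretic, hence valid for any $X$.

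So your proposal is on the right track but stops at the crucial step; the paper's content here is exactly the elementary Theorem~\ref{thm:minklinear} that replaces your tube-formula heuristic.
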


The proof of this corollary consists in considering the evident linear map $L : B^n \to E$. applying Theorem~\ref{thm:ball-waist} to $f\circ L$ to have $\lmink_{n-k} (f\circ L)^{-1}(y) \ge v_{n-k}$ and applying the following theorem to the map $L$:

\begin{theorem}
\label{thm:minklinear}
The linear map $L(x_1,\ldots,x_n) = (a_1x_1,a_2x_2,\ldots,a_nx_n)$ with $0<a_1\le a_2\le \dots \le a_n$ has the property
\[
a_1\ldots a_k \lmink_k X \le  \lmink_k L(X) \le a_{n-k+1}\dots a_n \lmink_k X.
\]
\end{theorem}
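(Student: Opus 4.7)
The plan is to prove only the upper bound $\lmink_k L(X)\le a_{n-k+1}\cdots a_n\lmink_k X$, as the lower bound follows by applying the upper bound to the diagonal map $L^{-1}$ (whose sorted axes are $1/a_n\le\cdots\le 1/a_1$) with $L(X)$ in place of $X$: this gives $\lmink_k X = \lmink_k L^{-1}(L(X))\le (a_1\cdots a_k)^{-1}\lmink_k L(X)$, which is exactly the lower inequality.

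For the upper bound, the pivotal identity is $L(X)+tB^n = L(X+tE)$ with $E := L^{-1}(B^n)$ the axis-aligned ellipsoid of semi-axes $1/a_1,\ldots,1/a_n$. Taking volumes gives
\[
\vol_n(L(X)+tB^n) = (a_1\cdots a_n)\vol_n(X+tE),
\]
so the claim reduces to the estimate $\liminf_{t\to 0^+}\vol_n(X+tE)/(v_{n-k}t^{n-k}) \le \lmink_k X/(a_1\cdots a_{n-k})$. The key convex-geometric input is the projection bound: for every $(n-k)$-dimensional subspace $V\subseteq\mathbb R^n$, $\vol_{n-k}(\pi_V E)\le v_{n-k}/(a_1\cdots a_{n-k})$. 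To see this, write $\pi_V E$ as the image of the unit ball under the $(n-k)\times n$ matrix $\pi_V L^{-1}$; by the Cauchy--Binet formula, its squared $(n-k)$-volume is a convex combination of the products $\prod_{i\in S}a_i^{-2}$ over $(n-k)$-subsets $S\subseteq\{1,\ldots,n\}$, with the maximum attained at $S=\{1,\ldots,n-k\}$, giving exactly the stated bound.

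The estimate on $\vol_n(X+tE)$ then follows from a tube-type computation: near a point $x\in X$ with approximate tangent $k$-plane $T_x$, the transverse cross-section of $X+tE$ through $x$ is asymptotically $t\pi_{T_x^\perp}(E)$, a rescaled $(n-k)$-dimensional ellipsoid of volume at most $v_{n-k}t^{n-k}/(a_1\cdots a_{n-k})$. Integrating this over $X$ and using that $\lmink_k X$ controls the relevant $k$-dimensional mass produces the desired liminf bound; multiplication by $|\det L|=a_1\cdots a_n$ then yields
\[
\lmink_k L(X)\le\frac{a_1\cdots a_n}{a_1\cdots a_{n-k}}\lmink_k X = a_{n-k+1}\cdots a_n\lmink_k X.
\]

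The main obstacle will be making this tube-formula step rigorous at the level of the lower Minkowski content for a general compact $X$, without any rectifiability hypothesis. I expect this can be bypassed by a direct covering argument: cover $X+tE$ by axis-aligned ellipsoidal slabs whose widths in each coordinate direction match the corresponding semi-axis of $E$, and compare the resulting slab volumes to the analogous thickening of $X$ by Euclidean balls, so that the comparison uses only the Cauchy--Binet projection inequality above. Alternatively, one can approximate $X$ from outside by finite unions of small Euclidean balls for which the inequality is elementary, and pass to the limit.
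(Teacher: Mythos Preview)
Your reduction of the lower bound to the upper bound via $L^{-1}$ matches the paper, and the identity $L(X)+tB^n=L(X+tE)$ together with the Cauchy--Binet projection bound are correct. The gap is exactly where you flag it: the tube-type step comparing $\vol_n(X+tE)$ with $\vol_n(X+tB^n)$ via transverse cross-sections requires, at the point where you invoke an ``approximate tangent $k$-plane $T_x$,'' a regularity of $X$ (rectifiability, or at least existence of such tangents a.e.) that the statement does not assume. The theorem is used in the paper for the preimage $(f\circ L)^{-1}(y)$ of a merely continuous map, so $X$ can be an arbitrary closed set. Neither proposed bypass closes the gap: a covering by slabs does not obviously yield the sharp constant $1/(a_1\cdots a_{n-k})$ without a tangent-plane argument, and approximating $X$ from outside by finite unions of small balls destroys $\lmink_k$ entirely (any finite union of $n$-balls has $\lmink_k=\infty$ for $k<n$), so there is no limit to pass to. The crude containment $tE\subseteq (t/a_1)B^n$ only gives the weaker factor $a_1^{-(n-k)}$, so the sharp bound is genuinely the nontrivial content here.

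The paper avoids all regularity issues by a completely elementary decomposition of $L$ into a homothety and single-coordinate scalings. Three short lemmas suffice: homothety by $\lambda$ multiplies $\lmink_k$ by $\lambda^k$; a one-coordinate stretch by $a\ge 1$ multiplies $\lmink_k$ by at most $a$ (since $L(X+t)\supseteq L(X)+t$ and $\vol_n L(X+t)=a\vol_n(X+t)$); and a one-coordinate shrink by $0<a\le 1$ does not increase $\lmink_k$ (compare $(X+t)\cap\ell$ with $(L(X)+t)\cap\ell$ for every line $\ell$ parallel to the scaled axis: each contributing segment keeps its length while its center is pulled in by the factor $a$, so connected components of the union can only merge and shrink in diameter, hence the total length does not increase; then Fubini). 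Writing $L$ as homothety by $a_{n-k+1}$, then stretches of the top $k-1$ coordinates by $a_j/a_{n-k+1}\ge 1$, then shrinks of the bottom $n-k$ coordinates by $a_j/a_{n-k+1}\le 1$, gives the factor $a_{n-k+1}\cdots a_n$ with no hypothesis on $X$.
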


We will prove the upper bound, the lower bound follows from the upper bound for $L^{-1}$. The proof of this theorem decomposes into several lemmas.

\begin{lemma}
\label{lem:scale}
If $L$ is a homothety with factor $\lambda$ then $\lmink_k L(X) = \lambda^k \lmink_k X.$
\end{lemma}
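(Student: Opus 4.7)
The plan is to reduce the computation of $\lmink_k L(X)$ to that of $\lmink_k X$ by two elementary scaling identities and a change of variables in the liminf.

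First, since $L$ is a homothety with factor $\lambda>0$, it multiplies every Euclidean distance by $\lambda$; in particular, for a point $y = L(z)$ one has $\operatorname{dist}(y, L(X)) = \lambda \operatorname{dist}(z, X)$. From this I would deduce the set equality
\[
L(X) + t \;=\; L\bigl(X + t/\lambda\bigr),
\]
checking both inclusions (this is the only place where the fact that $L$ is a bijective similarity, rather than merely a Lipschitz map, really enters).

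Second, since a homothety with factor $\lambda$ rescales $n$-dimensional Lebesgue measure by $\lambda^n$, the identity above yields $\vol_n\bigl(L(X)+t\bigr) = \lambda^n \vol_n\bigl(X + t/\lambda\bigr)$. I would substitute this into the definition of the lower Minkowski content (whose denominator, for a $k$-dimensional set sitting in the ambient $\mathbb R^n$, is $v_{n-k} t^{n-k}$) and change variables from $t$ to $s = t/\lambda$, which still tends to $0^+$. The numerator then contributes a factor $\lambda^n$ and the rescaled denominator contributes $\lambda^{n-k}$, so the net constant $\lambda^n/\lambda^{n-k} = \lambda^k$ pulls outside the liminf and leaves exactly $\lmink_k X$, giving $\lmink_k L(X) = \lambda^k \lmink_k X$.

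There is essentially no obstacle; the whole content is the bookkeeping of the two exponents $n$ and $n-k$. The only mildly delicate point is the first set equality: for general scaling maps, such as the diagonal map considered in Theorem~\ref{thm:minklinear}, distances are not uniformly rescaled and only an inclusion survives, which is why the diagonal case will require more work in the subsequent lemmas. For an isotropic homothety the equality is exact and the proof reduces to the change of variables above.
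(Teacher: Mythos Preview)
Your argument is correct and follows essentially the same approach as the paper: the paper's one-line proof simply records the two identities $L(X+t) = L(X) + \lambda t$ and $\vol_n L(X+t) = \lambda^n \vol_n (X+t)$, which are exactly the set equality and volume scaling you derive, and leaves the substitution in the liminf to the reader. Your version is just a more explicit write-up of the same computation.
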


\begin{proof}
Follows from the facts that $L(X+t) = L(X) + \lambda t$ and $\vol_n L(X+t) = \lambda^n \vol_n (X+t)$.
\end{proof}

\begin{lemma}
\label{lem:stretch}
If $L(x_1,\ldots,x_n) = (ax_1, x_2,\ldots, x_n)$ and $a\ge 1$ then $\lmink_k L(X) \le a \lmink_k X$.
\end{lemma}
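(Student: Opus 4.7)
The plan is to exploit the fact that, because $a\ge 1$, the inverse map $L^{-1}(y_1,\ldots,y_n)=(y_1/a,y_2,\ldots,y_n)$ is $1$-Lipschitz: the first coordinate is contracted by the factor $1/a\le 1$, while the other coordinates are left alone, so no pair of points gets further apart. This observation will be the sole geometric input of the proof; from here everything is a volume computation.

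Given the $1$-Lipschitz property of $L^{-1}$, I would first establish the set-theoretic inclusion
\[
L(X)+t\ \subseteq\ L(X+t).
\]
Indeed, if $y$ lies within distance $t$ of some $L(x')\in L(X)$, then $x:=L^{-1}(y)$ satisfies $|x-x'|=|L^{-1}(y)-L^{-1}(L(x'))|\le |y-L(x')|\le t$, so $x\in X+t$ and $y=L(x)\in L(X+t)$. Next, since $L$ has constant Jacobian determinant equal to $a$, I have $\vol_n L(X+t)=a\cdot\vol_n(X+t)$. Combining these two facts gives the pointwise volume bound
\[
\vol_n\bigl(L(X)+t\bigr)\ \le\ a\cdot\vol_n(X+t).
\]

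Finally, dividing by $v_{n-k}t^{n-k}$ and taking $\liminf_{t\to+0}$ of both sides yields $\lmink_k L(X)\le a\cdot\lmink_k X$, which is the desired inequality.

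There is no real obstacle: the only place one could slip is in the direction of the Lipschitz estimate, since $L$ itself is $a$-Lipschitz (not $1$-Lipschitz) and the inclusion $L(X)+t\subseteq L(X+t)$ really does need $L^{-1}$ to be $1$-Lipschitz, which in turn needs the hypothesis $a\ge 1$. (If $a<1$ the inclusion could fail and one would instead obtain the mirror image of the statement, governed by $L^{-1}$; this is exactly how the lower bound in Theorem~\ref{thm:minklinear} will later be deduced.)
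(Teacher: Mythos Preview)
Your argument is correct and is essentially the same as the paper's: both rely on the inclusion $L(X)+t\subseteq L(X+t)$ together with $\vol_n L(X+t)=a\,\vol_n(X+t)$ to get $\vol_n(L(X)+t)\le a\,\vol_n(X+t)$, then pass to the $\liminf$. You simply spell out the reason for the inclusion (the $1$-Lipschitz property of $L^{-1}$) more explicitly than the paper does.
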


\begin{proof}
We observe that $\vol_n L( X + t) = a \vol_n (X+t)$ and $L( X + t ) \supseteq L(X) + t$, and deduce 
\[
\vol_n (L(X) + t) \le a \vol_n (X + t).
\]
\end{proof}

\begin{lemma}
\label{lem:shrink}
If $L(x_1,\ldots,x_n) = (ax_1, x_2,\ldots, x_n)$ and $0<a\le 1$ then $\lmink_k L(X) \le \lmink_k X$.
\end{lemma}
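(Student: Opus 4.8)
The plan is to prove the pointwise estimate $\vol_n(L(X)+t)\le\vol_n(X+t)$ for every $t>0$; since $\lmink_k$ is a normalized $\liminf$ of $\vol_n(\cdot+t)$ as $t\to+0$, dividing and taking the limit then gives the lemma. (The soft inclusion argument behind Lemma~\ref{lem:stretch} does not suffice here, because $L^{-1}$ is only $\tfrac1a$-Lipschitz: it yields merely $\vol_n(L(X)+t)\le a\,\vol_n(X+t/a)$, which after rescaling loses a factor $a^{1+k-n}\ge1$ once the codimension is at least two.)

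Write $\mathbb R^n=\mathbb R\times\mathbb R^{n-1}$ with the first factor the $x_1$-axis, so $L(x_1,x')=(ax_1,x')$. The key observation is that $X+t=\{y:\operatorname{dist}(y,X)\le t\}=\bigcup_{x\in X}(x+tB^n)$ is a union of balls of radius $t$, and $L(X)+t=\bigcup_{x\in X}(Lx+tB^n)$ is the union of the same balls with their centres contracted, by the factor $a\in(0,1]$, towards the hyperplane $\{x_1=0\}$; in particular all these sets are closed, hence measurable. Slice along lines parallel to the $x_1$-axis: for $x'\in\mathbb R^{n-1}$ put $A^{x'}=\{s\in\mathbb R:(s,x')\in A\}$, so that $\vol_n(A)=\int_{\mathbb R^{n-1}}|A^{x'}|\,dx'$ (with $|\cdot|$ Lebesgue measure on a line). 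Since $L$ changes only the first coordinate, setting $I_{x'}:=\{(\sigma,w)\in X:|x'-w|\le t\}$ and $\rho=\rho(w,x'):=\sqrt{t^2-|x'-w|^2}$ we have, for each $x'$,
\[
(X+t)^{x'}=\bigcup_{(\sigma,w)\in I_{x'}}[\sigma-\rho,\sigma+\rho],\qquad
(L(X)+t)^{x'}=\bigcup_{(\sigma,w)\in I_{x'}}[a\sigma-\rho,a\sigma+\rho];
\]
so the second slice is obtained from the first by scaling by $a$ towards $0$ the centres of a system of intervals, keeping the radii unchanged.

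Everything thus reduces to a one-dimensional (Kneser--Poulsen type) inequality: for any intervals $[\sigma_i-r_i,\sigma_i+r_i]$, $i\in I$, and any $a\in[0,1]$,
\[
\Bigl|\bigcup_i[a\sigma_i-r_i,a\sigma_i+r_i]\Bigr|\ \le\ \Bigl|\bigcup_i[\sigma_i-r_i,\sigma_i+r_i]\Bigr|.
\]
A union of intervals in $\mathbb R$ agrees, off a null set, with an open set, hence with an at most countable union of intervals; so both sides equal the supremum over finite subfamilies $F\subseteq I$, and we may take $I$ finite. Then $g(a):=\bigl|\bigcup_i[a\sigma_i-r_i,a\sigma_i+r_i]\bigr|$ is continuous and piecewise linear on $(0,1]$: decomposing the union into its maximal blocks $[L_p(a),R_p(a)]$, on each linearity interval $R_p(a)=a\sigma_{i_p}+r_{i_p}$ and $L_p(a)=a\sigma_{j_p}-r_{j_p}$ for fixed indices $i_p,j_p$ realizing the two ends of the $p$-th block, so $g'(a)=\sum_p(\sigma_{i_p}-\sigma_{j_p})$. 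Each term is nonnegative: if $\sigma_{i_p}<\sigma_{j_p}$, then $a\sigma_{i_p}+r_{i_p}\ge a\sigma_{j_p}+r_{j_p}$ forces $r_{i_p}>r_{j_p}$, while $a\sigma_{j_p}-r_{j_p}\le a\sigma_{i_p}-r_{i_p}$ forces $r_{i_p}<r_{j_p}$, a contradiction. Hence $g$ is nondecreasing, so $g(a)\le g(1)$.

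Applying this slicewise gives $|(L(X)+t)^{x'}|\le|(X+t)^{x'}|$ for every $x'$; integrating in $x'$ yields $\vol_n(L(X)+t)\le\vol_n(X+t)$, and therefore $\lmink_k L(X)\le\lmink_k X$. The one substantive ingredient is the one-dimensional inequality above (alternatively one may invoke the classical Kneser--Poulsen theorem in dimension one); the slicing, Fubini, and the reduction to finitely many intervals are routine.
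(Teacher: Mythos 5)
Your proof is correct and follows essentially the same route as the paper: reduce by Fubini to slices along lines parallel to the $x_1$-axis, observe that the slice of $L(X)+t$ is the slice of $X+t$ with interval centers contracted by $a$ and radii unchanged, and then establish the one-dimensional monotonicity of the union's length. The only cosmetic difference is the 1D endgame: the paper argues geometrically that the diameter of each connected component of the union can only shrink while components may merge, whereas you differentiate the total length with respect to $a$ and show the derivative is nonnegative via the same center/radius contradiction; these are two phrasings of the same observation.
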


\begin{proof}
We show that the intersection of $X + t$ with any line $\ell$ parallel to $0x_1$ axis has length at least that of $(L(X)+t)\cap \ell$ and the result follows by the Fubini theorem. Every point $x\in X$ contributes a segment into $(X+t)\cap \ell$, the point $L(x)$ contributes to $(L(X)+t)\cap \ell$ a segment of the same length. So we have one set of segments of given lengths and the other set of segments of the same length with centers shrunk with factor $a$. So we have one set of segments of given lengths and the other set of segments of the same length with centers shrunk with factor $a$; and need to show that the total length of the union of segments is greater for the first family. Indeed,\footnote{This was prompted to us by Ilya Bogdanov.} any connected component of the first union of segments does not increase in diameter when passing to the second family of segments, and the components may merge, thus the total length cannot increase.
\end{proof}

Now to prove Theorem~\ref{thm:minklinear} we represent $L$ as the composition of

1) the homothety with coefficient $a_{n-k+1}$, Lemma~\ref{lem:scale} gives factor $a_{n-k+1}^k$;

2) the scalings of the last $k-1$ coordinates by $a_{n-k+2}/a_{n-k+1}, \ldots, a_n/a_{n-k+1}$, Lemma~\ref{lem:stretch} accumulates the factor $a_{n-k+2}\dots a_n / a_{n-k+1}^{k-1}$;

3) the scalings of the first $n-k$ coordinates by $a_1/a_{n-k+1}, \ldots, a_{n-k}/a_{n-k+1}$, Lemma~\ref{lem:shrink} shows $\lmink_k$ does not increase.

The total collected factor is then $a_{n-k+1}\dots a_n$, proving the upper bound in Theorem~\ref{thm:minklinear}.

Evidently, Theorem~\ref{thm:minklinear} produces the following improvement of~\cite[Corollary 5.3]{klartag2016}:

\begin{corollary}
\label{cor:parallelotope}
Consider a parallelotope $P = (0,a_1)\times \dots \times (0, a_n)$ with dimensions $0<a_1\le a_2\le \dots \le a_n$ and a continuous map $f : P\to \mathbb R^k$. There exists a fiber $f^{-1}(y)$ with $\lmink_{n-k} f^{-1}(y) \ge \prod_{i=1}^{n-k} a_i$.
\end{corollary}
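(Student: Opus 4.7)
The plan is to mimic the proof of Corollary~\ref{cor:ellipsoid}, replacing the ball with the unit cube and Theorem~\ref{thm:ball-waist} with Klartag's theorem for the cube (cited in the introduction). First I would set up the standard linear bijection $L : (0,1)^n \to P$ given by $L(x_1, \ldots, x_n) = (a_1 x_1, \ldots, a_n x_n)$. Given a continuous map $f : P \to \mathbb{R}^k$, consider the composition $f \circ L : (0,1)^n \to \mathbb{R}^k$. By Klartag's result there exists $y \in \mathbb{R}^k$ with
\[
\lmink_{n-k} (f\circ L)^{-1}(y) \ge 1.
\]

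Next I would transfer this lower bound to the fiber $f^{-1}(y) = L\bigl((f\circ L)^{-1}(y)\bigr)$. For this I would invoke the lower bound half of Theorem~\ref{thm:minklinear}, applied with the Minkowski dimension $n-k$ in place of $k$ and with the set $X = (f\circ L)^{-1}(y) \subset (0,1)^n$. Since the coefficients $a_1 \le \dots \le a_n$ are in increasing order, the lower-bound factor is the product of the $n-k$ smallest, namely $a_1 \cdots a_{n-k}$, so
\[
\lmink_{n-k} f^{-1}(y) = \lmink_{n-k} L(X) \ge a_1 \cdots a_{n-k} \lmink_{n-k} X \ge \prod_{i=1}^{n-k} a_i.
\]

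There is no substantive obstacle here: the whole content has already been supplied by Klartag's cube waist theorem and by Theorem~\ref{thm:minklinear}, whose lower bound the authors already stated is obtained by applying the upper bound to $L^{-1}$ (note that $L^{-1}$ is the coordinate scaling by $1/a_i$, and its largest $k$ scaling factors $1/a_1, \ldots, 1/a_k$ give upper bound $(a_1\cdots a_k)^{-1}$ for $\lmink_k L^{-1}(Y)$ in terms of $\lmink_k Y$, which rearranges to the lower bound used above after renaming $k\to n-k$). The only care needed is the dimensional bookkeeping: one must use Minkowski content of dimension $n-k$ for the fibers, so the relevant product in Theorem~\ref{thm:minklinear} contains $n-k$ factors, and since we want a \emph{lower} bound on $\lmink_{n-k} L(X)$ we pick up the $n-k$ smallest coefficients $a_1,\dots,a_{n-k}$, precisely matching the asserted bound.
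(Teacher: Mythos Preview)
Your proposal is correct and matches the paper's intended argument exactly: the paper simply says the corollary follows ``evidently'' from Theorem~\ref{thm:minklinear} (together with Klartag's cube waist theorem), which is precisely the linear transfer $L:(0,1)^n\to P$ you spell out, mirroring the proof of Corollary~\ref{cor:ellipsoid}. Your dimensional bookkeeping, in particular the use of the lower bound in Theorem~\ref{thm:minklinear} with $k$ replaced by $n-k$ to pick up the factor $a_1\cdots a_{n-k}$, is exactly right.
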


\bibliography{../Bib/karasev}
\bibliographystyle{abbrv}
\end{document}